\documentclass[11pt]{article}

\usepackage[tbtags]{amsmath}
\usepackage{amssymb}
\usepackage{amsthm}
\usepackage[misc]{ifsym}
\usepackage{cases}
\usepackage{cite}
\usepackage{mathrsfs}
\usepackage{xcolor}
\usepackage{graphicx}
\usepackage{float}
\usepackage{color}
\usepackage{hyperref}
\usepackage{amsmath}

\hypersetup{hypertex=true,
	colorlinks=true,
	linkcolor=red,
	anchorcolor=blue,
	citecolor=blue}

\numberwithin{equation}{section}
\setlength{\textwidth}{160mm} \setlength{\textheight}{218mm}
\oddsidemargin=2 mm \topskip 0.5cm \topmargin=-0.5in
\normalsize

\title{\bf MP and DPP for Mean-Variance Portfolio Selection Problem with Poisson Jumps, Recursive Utility and Their Relationship
	\thanks{This work is financially supported by the National Key R\&D Program of China (2022YFA1006104), National Natural Science Foundations of China (12471419, 12271304), and Shandong Provincial Natural Science Foundation (ZR2024ZD35).}}
\author{\normalsize
	Qiyue Zhang\thanks{\it School of Mathematics, Shandong University, Jinan 250100, P.R. China, E-mail: qiyuezhang@mail.sdu.edu.cn},\quad
    Jingtao Shi\thanks{\it Corresponding author, School of Mathematics, Shandong University, Jinan 250100, P.R. China, E-mail: shijingtao@sdu.edu.cn}}
\date{}

\newtheorem{mythm}{Theorem}[section]
\newtheorem{mydef}{Definition}[section]
\newtheorem{mylem}{Lemma}[section]
\newtheorem{Remark}{Remark}[section]

\begin{document}
	
\maketitle

\noindent{\bf Abstract:}\quad
	In this paper, the mean-variance portfolio selection problem with Poisson jumps are studied, where the recursive utility is given by the solution to a backward stochastic differential equation with Poisson jumps. Both the maximum principle and dynamic programming principle are applied to solve this problem, and their relationship is also investigated. The optimal portfolio and efficient frontier of Markowitz's type are derived using both methods. A comparison of efficient frontiers obtained in this paper and in the framework without jumps is conducted.
	
	\vspace{2mm}
	
\noindent{\bf Keywords:}\quad Mean-variance portfolio selection, Poisson jumps, recursive utility, backward stochastic differential equation, maximum principle, dynamic programming principle, efficient frontier
	
	\vspace{2mm}
	
\noindent{\bf Mathematics Subject Classification:}\quad 93E20, 60H10, 49N10
	
\section{Introduction}

The mean-variance portfolio selection problem can be traced back to the pioneering work of Nobel Prize-winning economist Harry Markowitz \cite{Ma52}. By constructing a portfolio, the investor aims to maximize the mean and minimize the variance of his/her total asset at the terminal time. The construction of portfolios involves a dual objective: maximizing the expected return and minimizing the risk  at the terminal date. However, traditional stochastic optimal control problem focuses on the single-objective optimization. In 2000, Zhou and Li \cite{ZL00} embedded the mean-variance portfolio selection problem into a stochastic {\it linear-quadratic} (LQ) problem, and then solved it to obtain the optimal portfolio and efficient frontier. Another tool for solving this two-objective problem is to consider it as a stochastic optimal control problem of mean field type: Anderson and Djehiche \cite{AD11} proposed this method in 2011.

Compared to traditional time-additive expected utilities, recursive utilities offer a more advanced theoretical framework, thus better informing investors' long-term asset allocation strategies. Duffie and Epstein \cite{DE92} proposed the concept of recursive utilities under the continuous-time framework firstly. El Karoui, Peng and Quenez \cite{KPQ97} used {\it backward stochastic differential equations} (BSDEs) to describe recursive utilities. The recursive utilities represent not only the instantaneous consumption rate but also the future utilities. Therefore, they can more accurately reflect the market fluctuations and have broad applications in finance. More literature about recursive utilities with applications, can refer to \cite{Peng93}, \cite{SS99}, \cite{Ma00}, \cite{EPQ01}, \cite{Sk03}, \cite{KSS13}, \cite{Wu13}, \cite{AM16}, \cite{Aa16}, \cite{Hu17}, \cite{JS18}, \cite{ZL23}, \cite{ZZ25} and the references therein. To the best of our knowledge, Lv et al. \cite{LWZ15} investigated the mean-variance portfolio selection problem with recursive utilities. Recently, Zhang and Li \cite{ZL23}, Zhang and Zhou \cite{ZZ25} discussed the efficient portfolio under the constraint that the mean of wealth is a constant using recursive utilities.

The {\it maximum principle} (MP) and {\it dynamic programming principle} (DPP) are two dominating approaches, to solve stochastic optimal control problems, and have close relationship (Yong and Zhou \cite{YZ99}). Shi and Yu \cite{SY13} researched relationship between MP and DPP for stochastic optimal control problems with recursive utilities. Nie et al. \cite{NSW17} investigated the relationship between MP and DPP for general stochastic optimal control problems with recursive utilities in the  viscosity solution framework, which was recently generalized to the problem with Poission jumps by Wang and Shi \cite{WS24}. Sun et al. \cite{SGZ18} showed the relationship between MP and DPP for Markov regime-switching stochastic optimal control problems with Poisson jumps and recursive utilities. Li \cite{Li23} studied the relationship between MP and DPP for stochastic optimal control problem with recursive utilities under volatility uncertainty. Dong et al. \cite{DMZ24} researched the relationship between MP and DPP for stochastic optimal control problem with recursive utilities and random coefficients.

Motivated by the fact that most real-world systems are not purely continuous, Poisson jumps fill the gap by capturing the discontinuous, infrequent shocks that drive critical behaviors across finance, science, and engineering. Lots of studies on stochastic optimal control problems with Poisson jumps with applications have been conducted by researchers. Framstad et al. \cite{FOS04} obtained a sufficient MP for the stochastic optimal control problem with Poisson jumps and found its financial application. Guo and Xu \cite{GX04} applied the DPP on solving the mean-variance portfolio selection problem with Poisson jumps. Shen and Siu \cite{SS13} viewed the mean-variance portfolio selection problem with Poisson jumps as a mean-field type stochastic optimal control problem and solved it using MP. Shi and Wu \cite{SW11} investigated the relationship between MP and DPP for the stochastic optimal control problem with Poisson jumps. For systems controlled by {\it forward-backward stochastic differential equations with Poisson jumps} (FBSDEPs), \O kensal and Sulem \cite{OS09}, Shi and Wu \cite{SW10} researched MPs, and applied to finance with recursive utilities. Li and Peng \cite{LP09} obtained the DPP for the stochastic optimal control problem with Poisson jumps and recursive utilities. Shi \cite{Shi14} found the relationship between MP and DPP for this problem. 

Very recently, Zhang and Shi \cite{ZS25} used the embedding theorem to the mean-variance portfolio selection problem with Poisson jumps and obtained the relationship between MP and DPP. In this paper, we generalize the problem to the case of recursive utilities. The controlled system is described by an FBSDEP, where the cost functional is defined by the solution to a {\it BSDE with Poisson jumps} (BSDEP). Different from \cite{ZS25}, long-term risk in the entire future consumption stream is considered through recursive utilities. Therefore, the optimal portfolio and efficient frontier we derive better align with market realities. Moreover, \cite{Shi14} and other papers (For example, \cite{ZL23} and \cite{ZZ25}) investigate mean-variance portfolio selection problems with recursive utilities, with the constraint that the mean of wealth is constant. This assumption is not necessary, as it is not inherent to the problem itself, which can be removed from this paper. 

The contribution of this paper can be summarized as follows.

(1) By embedding the problem into a stochastic optimal control problem with Poisson jumps, we can solve it directly, and the relationship between MP and DPP is given. And the constraint on the mean value in \cite{Shi14} can be removed.

(2) Besides the optimal portfolio, the efficient frontier of our problem is obtained, which has no previous result in the literature.

(3) We compare the efficient frontier with recursive utilities in this paper with that in \cite{ZL00}, and analyze their difference, which help us to gain a deeper understanding of efficient frontiers.

The remaining sections of this paper are organized as follows. In section 2, we state our problem and embed our original problem into a stochastic LQ optimal control problem of FBSDEPs. In section 3, the control problem is solved by the MP. The optimal portfolio and efficient frontier are obtained. Specially, the comparison of efficient frontiers is conducted. In section 4, we find the optimal portfolio of the problem by the DPP. In section 5, the relationship between MP and DPP is obtained.

\section{Problem statement}

Let $T > 0$ and $T$ is finite. $(\Omega ,\mathcal{F},\mathbb{P})$ is a complete probability space, equipped with a one-dimensional standard Brownian motion $\left \{ B(t) \right \} _{0\le t \le T}$ and a Poisson random measure $N(\cdot , \cdot)$ independent of $B(\cdot)$ with the intensity measure $\hat{N} (dt,dz)=\lambda(dz)dt$. The compensated Poisson martingale measure is defined as $\tilde{N} (dt,dz):= N(dt,dz)-\lambda(dz)dt$. 

Now we assume a market consisting of two assets: risk-free bond and risky asset. We denote $S_0(t)$ as the price of the risk-free bond at time $t$, which is given by
\begin{equation*}
dS_0 (t)= \rho_tS_0 (t)dt,
\end{equation*}
where $\rho_\cdot> 0$, is a deterministic continuous function on $[0,T]$.
We denote the risky asset price process as $S_1 (t)$, where $t\in [0,T]$. It can be described as following {\it stochastic differential equation with Poisson jumps} (SDEP):
\begin{equation*}
dS_1 (t)= S_1 (t)\Big[\mu_tdt+\sigma_tdB(t)+\int_{\mathbb{R}\setminus\left\{ 0 \right\}} \eta(t,z)\tilde{N} (dt,dz)\Big],
\end{equation*}
where $\mu_\cdot$ and $\sigma_\cdot$ are deterministic continuous functions on $[0,T]$, $\eta(\cdot,\cdot):[0,T]\times \mathbb{R}\mapsto \mathbb{R}$ is a deterministic continuous function. We adopt a common assumption that $\mu_t> \rho_t$, $t\in[0,T]$.

The wealth process of some investor is denoted as $X(\cdot)$, who holds $\theta_0(t)$ units of the risk-free bond and $\theta_1(t)$ units of the risky asset at time $t$. Then
\begin{equation*}
X (t)= \theta_0(t)S_0 (t)+\theta_1(t)S_1 (t),\quad t\in[0,T],
\end{equation*}
where the portfolio selection satisfies self-financing property, and $x>0$ represents the initial wealth. So $X(\cdot)$ satisfies
\begin{equation*}
X (t)= x +\int_0^t \theta_0(s)dS_0 (s)+\int_0^t \theta_1(s)dS_1 (s).
\end{equation*}
We define
\begin{equation*}
v(t):= \theta_1(t) S_1(t),\quad t\in[0,T],
\end{equation*}
as the amount of the wealth invested in the risky asset. So the wealth process $X(\cdot)$ can be described as
\begin{equation}\label{wealth}
\left\{
\begin{aligned}
dX (t)&= \big[\rho_tX(t)+(\mu_t-\rho_t)v(t)\big]dt +\sigma_tv(t)dB(t) +v(t)\int_{\mathbb{R}\setminus\left\{ 0 \right\} } \eta(t,z)\tilde{N} (dt,dz),\\
 X (0)&=x.  
\end{aligned}
\right.
\end{equation}

To ensure the fitness of the cost functional in the subsequent optimal control problem, applying an approach similar to Zhang and Li \cite{ZL23}, we first introduce an estimate of the $L^p$-solution to the BSDEP under given conditions.

\begin{mylem} (Yao \cite{Yao17})
Let $(Y(\cdot),Z(\cdot),K(\cdot,\cdot))$ be the solution to the BSDEP
\begin{equation}\label{bsdep}
Y(t)=\xi +\int_t^T f(s,Y(s),Z(s),K(s,z))ds-\int_t^T Z(s)dW(s)-\int_t^T\int_{\mathbb{R}\setminus\left\{ 0 \right\} } K(s,z)\tilde{N} (dt,dz),
\end{equation}
where $\mathbb{E}[|\xi|^p]< \infty$, and function $f(s,Y(s),Z(s),K(s,z)):\ [0,T]\times\mathbb{R}\times\mathbb{R}\times([0,T]\times\mathbb{R}\setminus\left\{ 0 \right\})\to \mathbb{R}$ satisfy the assumptions in Theorem 2.1 of Yao \cite{Yao17}. Then for some $p \in (1,2)$, there exists a positive constant $C_p$ such that 
\begin{equation} 
\begin{aligned}
&\mathbb E\bigg[\sup_{0\le t\le T}|Y(t)|^p+\Big(\int_0^T|Z(s)|^2ds\Big)^\frac{p}{2}+\int_t^T\int_{\mathbb{R}\setminus\left\{ 0 \right\} } |K(s,z)|^p\tilde{N} (dt,dz) \bigg]\\
&\le C_p \mathbb E\bigg[1+|\xi|^p+\Big(\int_t^T|f(s,0,0,0)|ds\Big)^p\bigg].
\end{aligned}
\end{equation}
\end{mylem}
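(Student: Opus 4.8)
The plan is to establish the bound through an Itô-type energy argument, the only genuine subtlety being that $y \mapsto |y|^p$ fails to be $C^2$ at the origin when $p \in (1,2)$. First I would regularize it, setting $\phi_\varepsilon(y) := (|y|^2 + \varepsilon)^{p/2}$ for $\varepsilon > 0$, which is $C^2$, convex, with $\phi_\varepsilon'(y) = p\,y\,(|y|^2+\varepsilon)^{(p-2)/2}$ and a second derivative that degenerates like $(|y|^2+\varepsilon)^{(p-2)/2}$. Applying the Itô formula for semimartingales with jumps to $\phi_\varepsilon(Y(t))$ on $[t,T]$ and taking expectations to annihilate the Brownian and compensated-Poisson martingale integrals, the $+\int f\,ds$ sign convention in \eqref{bsdep} yields the identity
\begin{equation*}
\begin{aligned}
&\mathbb{E}\big[\phi_\varepsilon(Y(t))\big] + \tfrac{1}{2}\mathbb{E}\int_t^T \phi_\varepsilon''(Y(s))|Z(s)|^2\,ds + \mathbb{E}\int_t^T\!\!\int_{\mathbb{R}\setminus\{0\}} \big[\phi_\varepsilon(Y+K)-\phi_\varepsilon(Y)-\phi_\varepsilon'(Y)K\big]\lambda(dz)\,ds \\
&\qquad = \mathbb{E}\big[\phi_\varepsilon(\xi)\big] + \mathbb{E}\int_t^T \phi_\varepsilon'(Y(s))\, f(s,Y(s),Z(s),K(s,z))\,ds,
\end{aligned}
\end{equation*}
where the two integral terms on the left are nonnegative by convexity of $\phi_\varepsilon$ and will serve as absorbers.

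Next I would bound the right-hand side using the structural hypotheses on $f$ from Theorem 2.1 of Yao \cite{Yao17}. Splitting $f(s,Y,Z,K) = f(s,0,0,0) + [f(s,Y,Z,K)-f(s,0,0,0)]$ and invoking Lipschitz continuity in $(y,z,k)$ (with $k$ measured in the $L^2(\lambda)$-norm), the term $\phi_\varepsilon'(Y)f$ is dominated by $|Y|^{p-1}\big(|f(s,0,0,0)| + |Y| + |Z| + (\int|K|^2\lambda(dz))^{1/2}\big)$. Young's inequality then peels off the quadratic contributions so that the $(|Y|^2+\varepsilon)^{(p-2)/2}|Z|^2$ and $(|Y|^2+\varepsilon)^{(p-2)/2}|K|^2$ pieces can be absorbed into the nonnegative second-order and jump terms on the left, leaving behind a $|Y|^p$ contribution and the $|f(s,0,0,0)|$ source. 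For the jump compensator I would use the pointwise convexity estimate
\begin{equation*}
0 \le \phi_\varepsilon(y+k) - \phi_\varepsilon(y) - \phi_\varepsilon'(y)k \le C_p \big(|y|^2 + |k|^2 + \varepsilon\big)^{(p-2)/2}|k|^2,
\end{equation*}
which is the jump analogue of $\phi_\varepsilon''$ and simultaneously supplies the nonnegativity used above and, for large $|k|$, a clean $|k|^p$ bound.

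After these absorptions I arrive at a Gronwall inequality for $\mathbb{E}\big[(|Y(t)|^2+\varepsilon)^{p/2}\big]$ with a forcing term of the form $\mathbb{E}\big[1 + |\xi|^p + (\int_t^T |f(s,0,0,0)|\,ds)^p\big]$, where Hölder/Jensen converts the time integral of the source into the stated $p$-th power. This gives the bound uniformly in $\varepsilon$, and letting $\varepsilon \downarrow 0$ by dominated convergence recovers the estimate for $\mathbb{E}[|Y(t)|^p]$ together with, through the absorbed terms, the controls on $\mathbb{E}\big[(\int_0^T|Z|^2\,ds)^{p/2}\big]$ and $\mathbb{E}\big[\int_t^T\!\!\int|K|^p\,\lambda(dz)\,ds\big]$. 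To upgrade $\mathbb{E}[|Y(t)|^p]$ to the running supremum $\mathbb{E}[\sup_{0\le t\le T}|Y(t)|^p]$, I would return to the Itô identity \emph{before} taking expectations, isolate the martingale integrals, and apply the Burkholder-Davis-Gundy inequality to their suprema; the resulting quadratic variations reproduce precisely the $(\int|Z|^2)^{p/2}$ and $\int(|K|^2\wedge|K|^p)$ quantities already controlled, with a final Young's inequality absorbing the half-power BDG constants.

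The step I expect to be the main obstacle is the treatment of the jump compensator in the sub-quadratic regime $p \in (1,2)$: the second-order Taylor remainder of $\phi_\varepsilon$ must be matched against the Lipschitz contribution of $K$ through the generator, uniformly in $\varepsilon$ and uniformly across small and large jumps, and the BDG control of the compensated-Poisson martingale at exponent $p/2 < 1$ is exactly where the restriction $p \in (1,2)$ is both essential and exploited. Everything else reduces to routine applications of Young's and Gronwall's inequalities.
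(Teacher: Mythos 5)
The paper offers no proof of this lemma: it is imported verbatim as a known a priori estimate from Theorem 2.1 of Yao \cite{Yao17}, so there is no internal argument to compare against. Your sketch follows the standard route for $L^p$ estimates of BSDEs with jumps when $p\in(1,2)$ (regularize $|y|^p$ by $(|y|^2+\varepsilon)^{p/2}$, It\^o, absorb via convexity and Young, Gronwall, then BDG for the supremum), which is essentially the architecture of Yao's own proof, so the plan is sound. Two points are glossed over and deserve mention. First, before taking expectations you must localize by stopping times so that the Brownian and compensated-Poisson integrals are true martingales; this is routine but not free. Second, and more substantively, the passage from the absorbed quantity $\mathbb{E}\int_t^T(|Y|^2+\varepsilon)^{(p-2)/2}|Z|^2\,ds$ to the stated control of $\mathbb{E}\big[(\int_0^T|Z|^2ds)^{p/2}\big]$ is not automatic, because $(|y|^2+\varepsilon)^{(p-2)/2}$ degenerates for large $|y|$ when $p<2$: one needs the interpolation $\int|Z|^2ds\le \sup_t|Y(t)|^{2-p}\int|Y|^{p-2}|Z|^2ds$ followed by Young's inequality with exponents $\tfrac{2}{2-p}$ and $\tfrac{2}{p}$, which couples this term to the already-established bound on $\mathbb{E}\sup_t|Y(t)|^p$; a parallel remark applies to extracting the $|K|^p$ control, for which you need a \emph{lower} bound on the Taylor remainder $\phi_\varepsilon(y+k)-\phi_\varepsilon(y)-\phi_\varepsilon'(y)k$ of the same $(|y|^2+|k|^2+\varepsilon)^{(p-2)/2}|k|^2$ form, not only the upper bound you wrote. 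With those two steps supplied, your argument closes and reproduces the cited estimate.
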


Below, we present the optimal control problem for this paper, beginning with the definition of the admissible control set.

\begin{mydef}
Any $v(\cdot)$ satisfying $\Big[\mathbb{E}\Big(\int_0^T v^2(t)dt\Big)^\frac{p}{2}\Big]^{1\wedge \frac{1}{p}}< \infty$ is called an admissible control, whose set is denoted by $\mathcal{V}^p[0,T]$, where $p\in (1,2)$.
\end{mydef}

Different from \cite{ZS25} but motivated by \cite{SW10} and \cite{Shi14}, in this paper, we consider the mean-variance portfolio selection problem with Poisson jumps and recursive utilities. For the investor, his/her target is to find an optimal control $\hat{v}(\cdot)$ not only to maximize the terminal expected wealth $\mathbb{E}X(T)$ and minimize the terminal expected risk $\text{Var}X(T)$, but also to maximize a recursive utility at every moment in $[0,T]$.

For classical mean-variance portfolio selection problem, following the same procedure of \cite{ZS25} by the {\it embedding theorem} in \cite{ZL00}, the investor wishes to achieve
\begin{equation}\label{mean-variance cost functional}
\min\limits_{u(\cdot)\in\,\mathcal{U}^p[0,T]}J_0 (u(\cdot)):=\min\limits_{u(\cdot)\in\,\mathcal{U}[0,T]} \left \{ \mathbb{E}\Big[\frac{1}{2}y^{2}(T) \Big] \right \} ,
\end{equation}
where
\begin{equation}\label{state SDE}
\left\{
\begin{aligned}
dy(t)&=\big[\rho_ty(t) +(\mu_t-\rho_t)u(t)+\sqrt{\mu} \beta \rho_t\big]dt+ \sigma_tu(t)dB(t)+u(t)\int_{\mathbb{R}\setminus\left\{ 0 \right\} }\eta(t,z)\tilde{N}(dt,dz),\\
y(0)&=y\equiv \sqrt{\mu}(x-\beta),
\end{aligned}
\right.
\end{equation}
and we have applied the following variable substitution:
\begin{equation}\label{X and y}
\beta:= \frac{\lambda }{2\mu} ,\ y(t):=\sqrt{\mu} (X(t)-\beta),\ u(t)=\sqrt{\mu}v(t),\ t\in[0,T],
\end{equation}
for some Lagrange multiplier $\mu>0$, and $\lambda\equiv1+2\mu\mathbb{E}X(T)$. When $v(\cdot)$ is admissible, it's clear that $u(\cdot)$ is admissible after the variable substitution. For mean-variance portfolio selection problem with recursive utility, the investor wishes to maximize
\begin{equation}\label{recursive cost functional}
J_1 (u(\cdot)): =Y(t)\mid_{t=0},
\end{equation}
where the recursive utility process $Y(\cdot)$ satisfies
\begin{equation}\label{recursive}
Y(t)=\mathbb{E}\Big[-\frac{1}{2}y^{2}(T)+\int_t^T \big[\rho_sX(s)+(\mu_s-\rho_s)v(s)-\gamma Y(s)\big]ds\mid\mathcal{F}_{t}\Big].
\end{equation}
Here, $\gamma\geq0$ is a given constant representing the consumption rate. For recursive utilities in market environment with jumps, see \cite{Ma00}, \cite{SW10}, \cite{AM16}. Now the investor now encounters the following optimization problem: to minimize
\begin{equation}\label{cost functional}
J (u(\cdot )): = -J_1(u(\cdot)).
\end{equation}

In fact, the new wealth process $y(\cdot)$ and the recursive utility process $Y(\cdot)$ satisfy the following FBSDEP:
\begin{equation}\label{FBSDEP}
\left\{
\begin{aligned}
  dy(t)&=\big[\rho_ty(t) +(\mu_t-\rho_t)u(t)+\sqrt{\mu} \beta \rho_t\big]dt+ \sigma_tu(t)dB(t)+u(t)\int_{\mathbb{R}\setminus\left\{ 0 \right\} }\eta(t,z)\tilde{N}(dt,dz),\\
  -dY(t)&=\bigg\{ \rho_t\Big[\frac{y(t)}{\sqrt\mu}+\beta\Big]+(\mu_t-\rho_t)\frac{u(t)}{\sqrt\mu}-\gamma Y(t)\bigg\} dt-Z(t)dB(t)-\int_{\mathbb{R}\setminus\left\{ 0 \right\} }K(t,z)\tilde{N} (dt,dz),\\
  y(0)&=\sqrt\mu(x-\beta),\quad Y(T)=-\frac{1}{2}y^{2}(T).
\end{aligned}
\right.
\end{equation} 

In \eqref{FBSDEP}, the 5-tuple $(y(\cdot), Y(\cdot), Z(\cdot), K(\cdot,\cdot), u(\cdot))$ is  admissible when $u(\cdot)$ is admissible, $\varphi(\cdot)$ is $\mathcal{F}_t$-adapted,
\begin{equation}
\mathbb E\Big(\int_0^T |\varphi(s)|^2ds\Big)^\frac{p}{2}<\infty,\ \varphi=y,\ Y,\ Z,\nonumber
\end{equation}
and
\begin{equation}
\begin{aligned}
\mathbb E\int_0^T \int_{\mathbb{R}\setminus\left\{ 0 \right\} } |K(t,z)|^p\tilde{N}(dt,dz)ds<\infty.\nonumber
\end{aligned}
\end{equation}

Then we can solve this problem by MP and DPP approaches, respectively.
\begin{Remark}
Unlike \cite{Shi14}, \cite{ZL23} and \cite{ZZ25}, which incorporate condition $\mathbb E[X(T)]=a$ (where $a$ is a given constant) to frame the problem within a stochastic optimal control framework, our approach in this paper employs the embedding theorem in \cite{ZL00} to transform the mean-variance problem—a dual-objective optimization—into a classical stochastic optimal control problem, without requiring the constraint $\mathbb E[X(T)]=a$ in the solution process. For further details regarding this point, we refer the reader to \cite{ZS25}.
\end{Remark}
\section{Solving the problem by MP}

We begin by employing the MP (see \cite{OS09}, \cite{SW10}) to tackle the above problem. From \eqref{FBSDEP} and \eqref{cost functional}, we know the Hamiltonian function is given by:
\begin{align}
H(t,y,Y,Z,u,p,q,r,R)&=\big[\rho_ty +(\mu_t-\rho_t)u+\sqrt{\mu} \beta \rho_t\big]p +\sigma_tur_t+u\int_{\mathbb{R}\setminus\left\{ 0 \right\} }\eta(t,z) R(t,z)\lambda(dz)\nonumber\\
&\quad -q\left \{  \rho_t\Big[\frac{y}{\sqrt\mu}+\beta]+(\mu_t-\rho_t)\frac{u}{\sqrt\mu}-\gamma Y\right \},\nonumber
\end{align}
and the adjoint equation is:
\begin{equation}\label{adjoint}
\begin{cases}
dq(t)=-\gamma q(t)dt, \\
-dp(t)=\rho_s\big[p(t)-\frac{1}{\sqrt\mu}q(t)\big]-r(t)dB(t)-\int_{\mathbb{R}\setminus\left\{ 0 \right\} }R(t,z)\tilde{N} (dt,dz),\\
q(0)=1,\quad p(T)=\hat{y}(T)q(T),
\end{cases}
\end{equation}
where $\hat{y}(\cdot)$ is the solution to \eqref{FBSDEP} corresponding to the optimal control $\hat{u}(\cdot$).
Obviously
\begin{equation}\label{q}
q(t)=e^{-\gamma t},\quad t\in[0,T].
\end{equation}
Suppose $p(\cdot)$ has the form
\begin{equation}\label{relation of p, y}
p(\cdot)=[\phi(\cdot)\hat{y}(\cdot)+\psi(\cdot)]q(\cdot),
\end{equation}
where $\phi(\cdot)$ and $\psi(\cdot)$ are determistic differential functions and satisfy $\phi(T)=1, \psi(T)=0$.
Using generalized It\^{o}'s formula (For example, Lemma 2.1 in \cite{ZS25}) to \eqref{relation of p, y}, we get
\begin{align}\label{dp}
dp(t)&=e^{-\gamma t}\Big\{\big[\dot{\phi}(t)+(\rho_t-\gamma)\phi(t)\big]\hat{y}(t)+\phi(t)(\mu_t-\rho_t)\hat{u}(t)+\phi(t)\sqrt\mu\beta\rho_t\nonumber\\
&\quad +\dot{\psi}(t)-\gamma\psi(t)\Big\}dt+ e^{-\gamma t}\phi(t)\sigma_s\hat{u}(t)dB(t)+\int_{\mathbb{R}\setminus\left\{ 0 \right\} }e^{-\gamma t}\phi(t)\eta(t,z)\hat{u}(t)\tilde{N}(dt,dz).
\end{align}
Compare the BSDEPs in \eqref{adjoint} and \eqref{dp}, we obtain:
\begin{equation}\label{compare}
\begin{cases}
 \big[\dot{\phi}(t)+(\rho_t-\gamma)\phi(t)\big]\hat{y}(t)+\phi(t)(\mu_t-\rho_t)\hat{u}(t)+\phi(t)\sqrt\mu\beta\rho_t+\dot{\psi}(t)-\gamma\psi(t)\\
  =-\rho_t\big[\phi(t)\hat{y}(t)+\psi(t)-\frac{1}{\sqrt\mu}\big],\\
  r(t)=\phi(t)\sigma_t\hat{u}(t)e^{-\gamma t},\quad R(t,z)=\phi(t)\hat{u}(t)\eta(t,z)e^{-\gamma t},\quad \text{a.e.}t\in[0,T],\quad \mathbb{P}\text{-a.s.}
\end{cases}
\end{equation}
Substituting \eqref{q}, \eqref{relation of p, y} and the above $r(\cdot),\ R(\cdot,\cdot)$ back into the Hamiltonian function, noting the partial derivative of $H$ with respect to $u$ at $\hat{u}(\cdot)$ is equal to 0, it achieves:
\begin{equation}\label{optimal control of MP}
\hat{u}(t)=\frac{(\rho_t-\mu_t)\big[\phi(t)\hat y(t)+\psi(t)-\frac{1}{\sqrt\mu}\big]}{\phi(t)\Lambda_t},\quad \text{a.e.}t\in[0,T],\quad \mathbb{P}\text{-a.s.},
\end{equation}
where
\begin{equation}
\Lambda_t: =\sigma^{2}_t+\int_{\mathbb{R}\setminus\left \{ 0 \right \} }\eta^{2}(t,z)\lambda(dz).\nonumber
\end{equation}
Another expression for $\hat{u}(\cdot)$ is acquired acccording to the first equality in \eqref{compare}:
\begin{equation}\label{optimal control of MP-another expression}
\hat{u}(t)=\frac{\big[\dot{\phi}(t)+2\rho_t\phi(t)-\gamma\phi(t)\big]\hat{y}(t)+\phi(t)\sqrt\mu\beta\rho_t+\dot{\psi}(t)+(\rho_t-\gamma)\psi(t)-\frac{1}{\sqrt\mu}\rho_t }{\phi(t)(\rho_t-\mu_t)}.
\end{equation}
Comparing \eqref{optimal control of MP} and \eqref{optimal control of MP-another expression}, we get:
\begin{equation}
\begin{aligned}
(\rho_t -\mu_t)^2\Big[\phi(t)\hat{y}(t)+\psi(t)-\frac{1}{\sqrt\mu}\Big]=&\Big\{\big[\dot{\phi}(t)+2\rho_t\phi(t)-\gamma\phi(t)\big]\hat{y}(t)+\phi(t)\sqrt\mu\beta\rho_t\\
&+\dot{\psi}(t)+(\rho_t-\gamma)\psi(t)-\frac{1}{\sqrt\mu}\rho_t \Big\}\Lambda_t,\quad t\in[0,T].\nonumber
\end{aligned}
\end{equation}
Two {\it ordinary differential equations} (ODEs) are derived by comparing the coefficients of the expression above:
\begin{equation}\label{phi}
\begin{cases}
(\rho_t-\mu_t)^2\phi(t)-\big[2\rho_t\phi(t)+\dot{\phi}(t)-\gamma \phi(t)\big]\Lambda_t=0,\\
\phi(T)=1,
\end{cases}
\end{equation}
\begin{equation}\label{psi}
\begin{cases}
(\rho_t-\mu_t)^2(\psi(t)-\frac{1}{\sqrt\mu})-\big[\phi(t)\sqrt\mu\beta\rho_t+(\rho_t-\gamma)\psi(t)+\dot{\psi}(t)-\frac{1}{\sqrt{\mu}}\rho_t\big]\Lambda_t=0,\\
\psi(T)=0.
\end{cases}
\end{equation}
For simplicity of notation, we define
\begin{equation*}
\theta_t:=\frac{(\mu_t-\rho_t)^2}{\Lambda_t},\quad t\in[0,T],
\end{equation*}
then \eqref{phi} and \eqref{psi} become:
\begin{equation}\label{new phi}
\begin{cases}
\dot{\phi}(t)-(\theta_t-2\rho_t+\gamma)\phi(t) =0,\\
\phi(T)=1,
\end{cases}
\end{equation}
\begin{equation}\label{new psi}
\begin{cases}
\dot{\psi}(t)-(\theta_t-\rho_t+\gamma)\psi(t)+\phi(t)\sqrt\mu\beta\rho_t-\frac{1}{\sqrt{\mu}}\rho_t+\frac{\theta_t}{\sqrt\mu}=0,\\
\psi(T)=0.
\end{cases}
\end{equation}
\eqref{new phi} and \eqref{new psi} have explicit solutions:
\begin{equation}\label{phi solution}
\phi(t)=e^{-\int_t^T(\theta_s-2\rho_s+\gamma)ds},
\end{equation}
\begin{equation}\label{psi solution}
\psi(t)=\sqrt\mu\beta\Big[e^{-\int_t^T(\theta_s-2\rho_s+\gamma)ds}-e^{-\int_t^T(\theta_s-\rho_s+\gamma)ds}\Big]+\frac{1}{\sqrt\mu}\int_t^T(\theta_s-\rho_s)e^{-\int_t^T(\theta_u-2\rho_u+\gamma)du}ds.
\end{equation}
Substituting \eqref{phi solution} and \eqref{psi solution} into \eqref{optimal control of MP}, we acquire the following state/wealth feedback form:
\begin{equation}\label{optimal control of MP-state}
\begin{aligned}
\hat{u}(t)=\frac{(\rho_t-\mu_t)\big[\phi(t)\hat y(t)+\psi(t)-\frac{1}{\sqrt\mu}\big]}{\phi(t)\Lambda_t},\quad \text{a.e.}t\in[0,T],\quad \mathbb{P}\text{-a.s.}
\end{aligned}
\end{equation}

We summarize the above analysis into the following theorem.
\begin{mythm}
By the MP, the state/wealth feedback form's optimal control $\hat{u}(\cdot)$ for our problem is given by \eqref{optimal control of MP-state}.
\end{mythm}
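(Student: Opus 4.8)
The plan is to read the statement as the conclusion of the stochastic maximum principle applied to the FBSDEP control problem \eqref{FBSDEP}--\eqref{cost functional}, so the proof splits into two parts: extracting the candidate $\hat u(\cdot)$ from the necessary conditions, and then confirming that this candidate is admissible and actually optimal. First I would invoke the MP for forward-backward systems with Poisson jumps (as in \cite{OS09} and \cite{SW10}): introduce the adjoint quadruple $(p,q,r,R)$, form the Hamiltonian $H$, write the adjoint system \eqref{adjoint}, and record the first-order stationarity condition $\partial H/\partial u=0$ evaluated at $\hat u(\cdot)$. The equation for $q$ decouples and integrates immediately to \eqref{q}.

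The technical core is to reduce the coupled adjoint BSDEP to ordinary differential equations. I would posit the affine ansatz \eqref{relation of p, y}, namely $p(\cdot)=[\phi(\cdot)\hat y(\cdot)+\psi(\cdot)]q(\cdot)$, apply the generalized It\^{o} formula to obtain the dynamics \eqref{dp}, and then match the drift, Brownian and jump coefficients against \eqref{adjoint}. This matching simultaneously yields the representations of $r$ and $R$ in \eqref{compare} and a scalar drift identity. Substituting $q,r,R$ into the stationarity condition produces the first expression \eqref{optimal control of MP} for $\hat u$, while the scalar identity gives the alternative expression \eqref{optimal control of MP-another expression}. Equating the two and separating the terms proportional to $\hat y(t)$ from the remaining inhomogeneous terms decouples the consistency requirement into the two linear ODEs \eqref{phi} (equivalently \eqref{new phi}) and \eqref{psi} (equivalently \eqref{new psi}), whose explicit solutions are \eqref{phi solution} and \eqref{psi solution}; substituting these back into \eqref{optimal control of MP} gives precisely \eqref{optimal control of MP-state}.

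To close the argument I would verify that this candidate is admissible and optimal, not merely stationary. For admissibility I would note that $\phi,\psi$ are deterministic and continuous with $\phi(\cdot)$ bounded away from $0$ and $\Lambda_t>0$, so \eqref{optimal control of MP-state} is an affine feedback of $\hat y(t)$ with bounded continuous coefficients; the closed-loop forward equation is then a linear SDEP, and the standard $L^p$-estimate (together with the estimate of the preceding lemma for the backward component) gives $\hat u(\cdot)\in\mathcal V^p[0,T]$. For optimality I would check the concavity/convexity hypotheses of the sufficient MP: since $H$ is affine in $(y,Y,u)$ and the terminal coupling $y\mapsto-\tfrac{1}{2}y^2$ entering $Y(T)$ is concave, the sufficient condition applies and the stationary $\hat u$ is the minimizer of $J$ in \eqref{cost functional}.

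I expect the main obstacle to be the consistency step, since the ansatz \eqref{relation of p, y} is justified only a posteriori: one must show that the two expressions \eqref{optimal control of MP} and \eqref{optimal control of MP-another expression} can be reconciled for \emph{every} admissible trajectory $\hat y$, which is exactly what forces the coefficient-wise separation into \eqref{new phi}--\eqref{new psi}. Attention is also needed to guarantee that the feedback law is well-defined throughout $[0,T]$, i.e. that $\phi(t)\neq0$ and $\Lambda_t=\sigma_t^2+\int_{\mathbb R\setminus\{0\}}\eta^2(t,z)\lambda(dz)>0$ and is finite, the latter being ensured by the standing continuity assumptions on $\sigma_\cdot$ and $\eta(\cdot,\cdot)$.
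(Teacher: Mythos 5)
Your proposal follows essentially the same route as the paper: the MP for FBSDEPs with jumps, the explicit solution $q(t)=e^{-\gamma t}$, the affine ansatz $p=[\phi\hat y+\psi]q$, coefficient matching via It\^{o}'s formula to get two expressions for $\hat u$, and the resulting ODEs \eqref{new phi}--\eqref{new psi} whose solutions yield \eqref{optimal control of MP-state}. The admissibility and sufficiency checks you add at the end (well-definedness of the feedback, concavity for the sufficient MP) are not carried out in the paper, which stops at the necessary/stationarity condition, so your version is if anything slightly more complete.
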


As pointed out by \cite{ZL00}, the efficient frontier aims to assist investors in finding the optimal balance between risk and return, which is an essential concept for portfolio selection. Now we try to find out the efficient frontier for our mean-variance portfolio selection problem with Poisson jumps and recursive utilities. First, by applying a variable substitution to \eqref{optimal control of MP-state}, we have
\begin{equation}\label{optimal control of MP-state x}
\begin{aligned}
\hat{v}(t)=\frac{(\rho_t-\mu_t)\big[\phi(t)\sqrt\mu(\hat X(t)-\beta)+\psi(t)-\frac{1}{\sqrt\mu}\big]}{\sqrt\mu\phi(t)\Lambda_t},\quad \text{a.e.}t\in[0,T],\quad \mathbb{P}\text{-a.s.},
\end{aligned}
\end{equation}
where $\hat{X}(\cdot)$ is the solution to \eqref{wealth} corresponding to $\hat{u}(\cdot)$: 
\begin{equation} \label{dX(t)}
\begin{aligned}
d\hat{X}(t) & = \rho_t\hat{X}(t)dt-\theta_t\Big[\hat{X}(t)-\beta+\frac{1}{\sqrt\mu}a(t)\psi(t)-\frac{1}{\mu}a(t)\Big]dt\\
&\quad+\sigma_t\frac{\rho_t-\mu_t}{\Lambda_t}\Big[\hat{X}(t)-\beta+\frac{1}{\sqrt\mu}a(t)\psi(t)-\frac{1}{\mu}a(t)\Big]dB(t)\\
&\quad+\frac{\rho_t-\mu_t}{\Lambda_t}\Big[\hat{X}(t)-\beta+\frac{1}{\sqrt\mu}a(t)\psi(t)-\frac{1}{\mu}a(t)\Big]\int_{\mathbb{R}\setminus\left \{ 0 \right \} }\eta(t,z)\tilde{N}(dt,dz),
\end{aligned}
\end{equation}
where we have defined
\begin{equation}
a(t):=e^{\int_t^T(\theta_v-2\rho_v+\gamma) dv}\equiv\frac{1}{\phi(t)}.
\end{equation}
Then take expectations to both sides, we get
\begin{equation}\label{expectation of X(t)}
\begin{cases}
d\mathbb{E}\hat{X}(t)=\rho_t\mathbb{E}\hat{X}(t)-\theta(t)\Big[\mathbb{E}\hat{X}(t)-\beta+\frac{1}{\sqrt\mu}a(t)\psi(t)-\frac{1}{\mu}a(t)\Big]dt\\
\mathbb{E}\hat{X}(0)=x.
\end{cases}
\end{equation}
Define
\begin{equation}
C(t):=-\beta+\frac{1}{\sqrt\mu}a(t)\psi(t)-\frac{1}{\mu}a(t),
\end{equation}
then the solution of \eqref{expectation of X(t)} is:
\begin{equation} \label{EX(t) solution}
\mathbb{E}\hat{X}(t)=e^{\int_0^t(\rho_s-\theta(s))ds}\Big[x-\int_0^t\theta(s)C(s)e^{-\int_0^t(\rho_u-\theta(u))du }ds\Big]. 
\end{equation}
Using \eqref{dX(t)}, applying generalized It\^{o}'s formula to $\hat{X}^2(\cdot)$ and taking expectations, we get
\begin{equation}\label{dEX^2(t)}
\begin{cases}
d\mathbb{E}\hat{X}^2(t)=\big[(2\rho_t-\theta(t))\mathbb{E}\hat{X}^2(t)+\theta(t)C^2(t)\big]dt,\\
\mathbb{E}\hat{X}^2(0)=x^2.
\end{cases}
\end{equation}
The solution of \eqref{dEX^2(t)} is
\begin{equation}\label{EX^2(t) solution}
\mathbb{E}\hat{X}^2(t)=e^{\int_0^t 2\rho_s-\theta(s)ds}\Big[x^2+\int_0^t\theta(s)C^2(s)e^{-\int_0^s 2\rho_r-\theta(r)dr}ds\Big].
\end{equation}
Finally, using \eqref{EX(t) solution} and \eqref{EX^2(t) solution}, we get
\begin{equation}\label{efficient frontier}
\begin{aligned}
 \text{Var}\hat X(t) & = \mathbb{E}\hat{X}^2(t)-[\mathbb{E}\hat{X}(t)]^2\\
&=e^{\int_0^t 2\rho_s-\theta(s)ds}\Big[x^2+\int_0^t\theta(s)C^2(s)e^{-\int_0^s 2\rho_r-\theta(r)dr}ds\Big]-[\mathbb{E}\hat{X}(t)]^2.
\end{aligned}
\end{equation}
To derive an explicit expression for the efficient frontier, we need to express $C(t)$ in terms of $\mathbb{E}\hat{X}(t)$. From \eqref{EX(t) solution}, after a simple solution of the integral equation for $C(t)$, we obtain
\begin{equation}\label{C(t)}
C(t)=\frac{\rho_t-\theta(t)}{\theta(t)}\mathbb{E}\hat{X}(t)-\frac{1}{\theta(t)}\frac{\mathrm{d} }{\mathrm{d} t}\mathbb{E}\hat{X}(t). 
\end{equation}
Substituting \eqref{C(t)} into \eqref{efficient frontier}, we find out
\begin{equation}\label{recursive efficient frontier}
\text{Var}\hat X(t)=e^{\int_0^t(2\rho_s-\theta(s))ds }x^2+\int_0^t\frac{e^{\int_{s}^{t}(2\rho_u-\theta(u))du }}{\theta(s)}\Big[(\rho_s-\theta(s))\mathbb{E}\hat{X}(s)-\frac{\mathrm{d} }{\mathrm{d} s}\mathbb{E}\hat{X}(s)\Big]^2ds- [\mathbb{E}\hat{X}(t)]^2.
\end{equation}

We summarize the above analysis into the following result.
\begin{mythm}
The efficient frontier of our mean-variance portfolio selection problem with Poisson jumps and recursive utilities \eqref{cost functional} is given by \eqref{recursive efficient frontier}.
\end{mythm}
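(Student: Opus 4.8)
The plan is to start from the state/wealth feedback control obtained in the preceding analysis and push it through the closed-loop wealth dynamics, reducing the problem to a pair of linear moment ODEs and then eliminating the embedding parameters. First I would substitute the feedback expression \eqref{optimal control of MP-state x} for $\hat v(\cdot)$ into the wealth equation \eqref{wealth}. Writing $a(t):=1/\phi(t)$ and recognizing that the recurring bracket $\hat X(t)-\beta+\frac{1}{\sqrt\mu}a(t)\psi(t)-\frac{1}{\mu}a(t)$ is exactly $\hat X(t)+C(t)$ with $C(t)$ as defined just before \eqref{EX(t) solution}, one collects the drift, diffusion, and jump coefficients into the compact closed-loop form \eqref{dX(t)}. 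All dependence on $\mu$ and $\beta$ is now packaged into the single deterministic function $C(t)$.

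Next I would extract the first two moments. Taking expectations in \eqref{dX(t)} annihilates the Brownian and compensated-Poisson integrals — these are martingales by the integrability built into Definition 2.2 together with Lemma 2.1 — leaving the linear ODE \eqref{expectation of X(t)} for $\mathbb{E}\hat X(t)$, whose variation-of-constants solution is \eqref{EX(t) solution}. Applying the generalized It\^o formula to $\hat X^2(\cdot)$, the Brownian and jump quadratic-variation contributions combine as $\big(\tfrac{\rho_t-\mu_t}{\Lambda_t}\big)^2\big(\sigma_t^2+\int\eta^2(t,z)\lambda(dz)\big)=\theta(t)$ after using $\Lambda_t=\sigma_t^2+\int\eta^2(t,z)\lambda(dz)$; crucially, the two cross terms $\pm 2\theta(t)\hat X(t)C(t)$ cancel, so upon taking expectations one arrives at the clean linear ODE \eqref{dEX^2(t)} for $\mathbb{E}\hat X^2(t)$ with solution \eqref{EX^2(t) solution}. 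Subtracting the square of the mean gives the preliminary variance formula \eqref{efficient frontier}, which still carries the auxiliary function $C(t)$.

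The substantive step — and the one I expect to be the main obstacle — is to turn \eqref{efficient frontier} into a genuine frontier, i.e.\ a relation between $\mathrm{Var}\,\hat X(t)$ and $\mathbb{E}\hat X(t)$ alone, free of the Lagrange multiplier $\mu$ and $\beta$. The idea is to reinterpret \eqref{expectation of X(t)}: rather than solving it for $\mathbb{E}\hat X(t)$, one reads it as a pointwise identity that is linear in $C(t)$ and inverts it directly, obtaining \eqref{C(t)}, which expresses $C(t)$ through $\mathbb{E}\hat X(t)$ and $\frac{d}{dt}\mathbb{E}\hat X(t)$. Substituting \eqref{C(t)} into \eqref{efficient frontier}, rewriting $\theta(s)C^2(s)=\frac{1}{\theta(s)}\big[\theta(s)C(s)\big]^2$, and using the semigroup identity $e^{\int_0^t(2\rho_s-\theta(s))ds}\,e^{-\int_0^s(2\rho_r-\theta(r))dr}=e^{\int_s^t(2\rho_u-\theta(u))du}$ collapses everything to the claimed \eqref{recursive efficient frontier}. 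The delicate point is precisely this reinterpretation of the mean ODE as an algebraic constraint on $C(t)$; once that move is made, the rest is a routine rearrangement of exponential factors, and one should finally note that the formula holds for every $t\in[0,T]$, the terminal case $t=T$ being the efficient frontier of Markowitz type.
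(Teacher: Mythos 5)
Your proposal is correct and follows essentially the same route as the paper: substitute the feedback law \eqref{optimal control of MP-state x} into \eqref{wealth} to obtain the closed-loop dynamics \eqref{dX(t)}, derive the linear moment ODEs \eqref{expectation of X(t)} and \eqref{dEX^2(t)} (your observation that the cross terms $\pm 2\theta(t)\hat X(t)C(t)$ cancel is exactly what makes \eqref{dEX^2(t)} close in $\mathbb{E}\hat X^2$ alone), and then invert the mean ODE to express $C(t)$ as in \eqref{C(t)} before substituting into \eqref{efficient frontier}. No gaps; your write-up actually makes explicit two points the paper leaves implicit, namely the martingale justification for dropping the stochastic integrals and the cross-term cancellation.
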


Now, we wish to compare and analyze the difference in the efficient frontiers derived from mean-variance portfolio selection problems. Zhou and Li \cite{ZL00} provide the classical form of the efficient frontier in the mean-variance portfolio selection problem:
\begin{equation}\label{Zhou efficient frontier}
\text{Var}\hat X(T)=\frac{1}{e^{\int_0^T\theta_0(t)dt}-1} \left(\mathbb{E}\hat{X}(T)-xe^{\int_0^T \rho_tdt}\right)^2,
\end{equation}
where
\begin{equation}
\theta_0(t):=\frac{(\mu_t-\rho_t)^2}{\sigma_t^2}.
\end{equation}
For the mean-variance portfolio selection problem with Poisson jumps \cite{ZS25}, it is known that the efficient frontier can be expressed as follows:
\begin{equation}\label{jump efficient frontier}
\text{Var}\hat X(T)=\frac{1}{e^{\int_0^T\theta(t)dt}-1} \left(\mathbb{E}\hat{X}(T)-xe^{\int_0^T \rho_tdt}\right)^2,
\end{equation}
where
\begin{equation}
\theta(t):=\frac{(\mu_t-\rho_t)^2}{\sigma^{2}_{t}+\int_{\mathbb{R}\setminus\left \{ 0 \right \} }\eta^{2}(t,z)\lambda(dz)}.\nonumber
\end{equation}
Comparing \eqref{Zhou efficient frontier} and \eqref{jump efficient frontier}, it is obviously that the image of both efficient frontiers are parabola. Numerical simulations of both images and further comparative analysis can be found in \cite{ZS25}.

However, we observe \eqref{recursive efficient frontier} differs significantly from \eqref{Zhou efficient frontier} and \eqref{jump efficient frontier}. The fundamental distinction lies in the inclusion of a derivative term with respect to the expectation. In fact, it represents a dynamic efficient frontier. The derivative term emerges as a result of employing recursive utility to characterize the mean-variance portfolio selection problem, where instantaneous investment and consumption are also incorporated. In practical applications, we hypothesize that high-frequency sampling could be utilized to estimate this derivative term, thereby approximating the expression of the dynamic efficient frontier.

\section{Solving the problem by DPP}

Now we use DPP to our problem \eqref{cost functional} (for example, \cite{LP09}). For this target, we have to introduce the following framework. For given $t\in[0,T)$, define the {\it dynamic} cost functional
\begin{equation}\label{recursive cost functional-DPP}
J (t,y;u(\cdot)): =-Y(s)\mid_{s=t},
\end{equation}
where $Y(s),s\in[t,T]$ satisfies
\begin{equation}\label{FBSDEP-DPP}
\left\{
\begin{aligned}
  dy(s)&=\big[\rho_sy(s) +(\mu_s-\rho_s)u(s)+\sqrt{\mu} \beta \rho_s\big]ds+ \sigma_su(s)dB(s)+u(s)\int_{\mathbb{R}\setminus\left\{ 0 \right\} }\eta(s,z)\tilde{N}(ds,dz),\\
  -dY(s)&=\bigg\{ \rho_s\Big[\frac{y(s)}{\sqrt\mu}+\beta\Big]+(\mu_s-\rho_s)\frac{u(s)}{\sqrt\mu}-\gamma Y(s)\bigg\} ds-Z(s)dB(s)-\int_{\mathbb{R}\setminus\left\{ 0 \right\} }K(s,z)\tilde{N} (ds,dz),\\
  y(t)&=\sqrt\mu(x-\beta),\quad Y(T)=-\frac{1}{2}y^{2}(T).
\end{aligned}
\right.
\end{equation} 
Thus, the value function $V(t,y)$ defined by
\begin{equation}\label{value function}
V(t,y):=J(t,y;\hat u(\cdot))=\inf\limits_{u(\cdot)\in\mathcal{U}^p[t,T]}J(t,y;u(\cdot))
\end{equation}
satisfies the HJB equation:
\begin{equation}\label{HJB}
\begin{cases}
-\frac{\partial V}{\partial t}(t,y)+\sup\limits_{u\in\mathbb{R}} G\Big(t,x,-V(t,x),-\frac{\partial V}{\partial y}(t,y),- \frac{\partial^2V}{\partial y^2} (t,y),u\Big)=0,\quad (t,y)\in[0,T)\times\mathbb{R},\\
V(T,y)=\frac{1}{2}y^2(T),\quad y\in\mathbb{R},
\end{cases}
\end{equation}
where the generalized Hamiltonian function $G$ is:
\begin{align}\label{G}
&G\Big(t,y,-V(t,y),-\frac{\partial V}{\partial y}(t,y),- \frac{\partial^2V}{\partial y^2} (t,y),u\Big)\nonumber\\
&:=-\frac{\partial V}{\partial y}(t,y)\big[y\rho_t+u(\mu_t-\rho_t)+\sqrt{\mu}\beta\rho_t\big]-\frac{1}{2} \frac{\partial^2V}{\partial y^2}(t,y)u^2\sigma^2_t\nonumber\\
&\qquad -\int_{\mathbb{R}\setminus\left\{ 0 \right\} }\left[V(t,y+u\eta(t,z))-V(t,y)-u\eta(t,z)\frac{\partial V}{\partial y} (t,y)\right] \lambda(dz)\nonumber\\
&\qquad +\rho_t\Big[\frac{y(t)}{\sqrt\mu}+\beta\Big]+(\mu_t-\rho_t)\frac{u(t)}{\sqrt\mu}+\gamma V(t,y).
\end{align}

We set
\begin{equation}\label{value function of P, Q, R}
V(t,y)=\frac{1}{2}P(t)y^{2}+Q(t)y+R(t),\quad (t,y)\in[0,T]\times\mathbb{R},
\end{equation}
where $P(\cdot), Q(\cdot), R(\cdot)$ are differential functions with $P(T)=1, Q(T)=R(T)=0$. Substituting \eqref{value function of P, Q, R} into \eqref{G}, and then completing square, we get
\begin{align}\label{completing square}
&G\Big(t,y,-V(t,y),-\frac{\partial V}{\partial y}(t,y),- \frac{\partial^2V}{\partial y^2} (t,y),u\Big)\nonumber\\
&=-\frac{1}{2}P(t)\Lambda_t\left[u(t)-\frac{(\rho_t-\mu_t)\big(P(t)y+Q(t)-\frac{1}{\sqrt\mu}\big)}{P(t)\Lambda_t}\right]^{2}+\bigg[\frac{(\mu_t-\rho_t)^2P(t)}{2\Lambda_t}+\frac{1}{2}\gamma P(t)-\rho_tP(t)\bigg]y^2 \nonumber\\
&\quad +\left[\frac{(\rho_t-\mu_t)^2\big(Q(t)-\frac{1}{\sqrt\mu}\big)}{\Lambda_t}+\gamma Q(t)-\sqrt{\mu}\beta\rho_tP(t)-Q(t)\rho_t+\frac{\rho_t}{\sqrt\mu}\right]y\nonumber\\
&\quad+\frac{(\rho_t-\mu_t)^2\big(Q(t)-\frac{1}{\sqrt\mu}\big)^2}{2P(t)\Lambda_t}+Q(t)\sqrt\mu\beta\rho_t+\rho_t\beta+\gamma R(t).
\end{align}
Substituting \eqref{completing square} into \eqref{HJB}, we get
\begin{align}
&\frac{1}{2}P'(t)y^2+Q'(t)y+R'(t)\nonumber\\
&=-\frac{1}{2}P(t)\Lambda_t\left[u(t)-\frac{(\rho_t-\mu_t)\big(P(t)y+Q(t)-\frac{1}{\sqrt\mu}\big)}{P(t)\Lambda_t}\right]^{2}+\bigg[\frac{(\mu_t-\rho_t)^2P(t)}{2\Lambda_t}+\frac{1}{2}\gamma P(t)-\rho_tP(t)\bigg]y^2 \nonumber\\
&\quad +\left[\frac{(\rho_t-\mu_t)^2\big(Q(t)-\frac{1}{\sqrt\mu}\big)}{\Lambda_t}+\gamma Q(t)-\sqrt{\mu}\beta\rho_tP(t)-Q(t)\rho_t+\frac{\rho_t}{\sqrt\mu}\right]y\nonumber\\
&\quad+\frac{(\rho_t-\mu_t)^2\big(Q(t)-\frac{1}{\sqrt\mu}\big)^2}{2P(t)\Lambda_t}+Q(t)\sqrt\mu\beta\rho_t+\rho_t\beta+\gamma R(t)\nonumber\\
&\ge \bigg[\frac{(\mu_t-\rho_t)^2P(t)}{2\Lambda_t}+\frac{1}{2}\gamma P(t)-\rho_tP(t)\bigg]y^2\nonumber\\
&\quad+\left[\frac{(\rho_t-\mu_t)^2\big(Q(t)-\frac{1}{\sqrt\mu}\big)}{\Lambda_t}+\gamma Q(t)-\sqrt{\mu}\beta\rho_tP(t)-Q(t)\rho_t+\frac{\rho_t}{\sqrt\mu}\right]y\nonumber\\
&\quad+\frac{(\rho_t-\mu_t)^2\big(Q(t)-\frac{1}{\sqrt\mu}\big)^2}{2P(t)\Lambda_t}+Q(t)\sqrt\mu\beta\rho_t+\rho_t\beta+\gamma R(t).
\end{align}
From \eqref{completing square}, we know that if and only if the following three ODEs
\begin{equation}\label{ODE P}
\begin{cases}
P'(t)-(\theta_t+\gamma-2\rho_t)P(t) =0,\\
P(T)=1,
\end{cases}
\end{equation}
\begin{equation}\label{ODE Q}
\begin{cases}
Q'(t)-(\theta_t+\gamma-\rho_t)Q(t)+\sqrt{\mu}\beta\rho_tP(t)-\frac{\rho_t}{\sqrt\mu}+\frac{\theta_t}{\sqrt\mu}=0,\\
Q(T)=0,
\end{cases}
\end{equation}
\begin{equation}\label{ODE R}
\begin{cases}
R'(t)-\left[\gamma R(t)+\rho_t\beta+Q(t)\sqrt\mu\beta\rho_t+\frac{\theta_t(Q(t)-1)^2}{2P(t)}\right]=0,\\
R(T)=0,
\end{cases}
\end{equation}
admit solutions, problem \eqref{cost functional} has a unique optimal control
\begin{equation}\label{optimal control of DPP}
\hat{u}(t)=\frac{(\rho_t-\mu_t)\Big(P(t)\hat{y}(t)+Q(t)-\frac{1}{\sqrt\mu}\Big)}{P(t)\Lambda_t},\quad \text{a.e.}t\in[0,T],
\end{equation}
where $\hat{y}(\cdot)$ is the solution to \eqref{FBSDEP-DPP} corresponding to $\hat{u}(\cdot)$.

Notice that \eqref{ODE P}, \eqref{ODE Q} are both first-order ODEs, we can find out their unique solutions:
\begin{equation}\label{P}
P(t)=e^{-\int_t^T(\theta_u-2\rho_u+\gamma)du},\quad t\in[0,T].
\end{equation}
\begin{equation}\label{Q}
\begin{aligned}
Q(t)&=\sqrt\mu\beta\Big[e^{-\int_t^T(\theta_s-2\rho_s+\gamma)ds}-e^{-\int_t^T(\theta_s-\rho_s+\gamma)ds}\Big]\\
    &\quad +\frac{1}{\sqrt\mu}\int_t^T(\theta_s-\rho_s)e^{-\int_t^s(\theta_u-2\rho_u+\gamma)du}ds,\quad t\in[0,T].
\end{aligned}
\end{equation}
Comparing \eqref{phi solution} and \eqref{P}, we discover
\begin{equation}\label{phi and P}
\phi(t)=P(t),\quad t\in[0,T],
\end{equation}
and comparing \eqref{psi solution} and \eqref{Q}, we find
\begin{equation}\label{psi and Q}
\psi(t)=Q(t),\quad t\in[0,T].
\end{equation}

It stands to reason that the expression \eqref{optimal control of DPP} is the same as \eqref{optimal control of MP-state}.

We summarize the above process into the following result.
\begin{mythm}
By the DPP approach, the state/wealth feedback form's optimal control $\hat{u}(\cdot)$ for our mean-variance portfolio selection problem with Poisson jumps and recursive utilities \eqref{cost functional}, is given by \eqref{optimal control of DPP}.
\end{mythm}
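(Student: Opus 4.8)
The plan is to realize the dynamic programming principle through the Hamilton--Jacobi--Bellman (HJB) equation \eqref{HJB}, whose generalized Hamiltonian $G$ in \eqref{G} encodes both the forward dynamics of $y(\cdot)$ and the backward (recursive utility) generator via the extra terms $\gamma V$ and the nonlocal integral against $\lambda(dz)$. The starting point is the DPP for recursive utilities with Poisson jumps (as in \cite{LP09}), which guarantees that the value function $V(t,y)$ defined in \eqref{value function} satisfies \eqref{HJB}. Because the terminal cost $\frac{1}{2}y^{2}(T)$ is quadratic and the forward dynamics are affine in $(y,u)$ while the generator of $Y(\cdot)$ is linear in $(y,u,Y)$, the problem is of linear--quadratic type, so the natural ansatz is the quadratic form \eqref{value function of P, Q, R}, namely $V(t,y)=\frac{1}{2}P(t)y^{2}+Q(t)y+R(t)$ with $P(T)=1$ and $Q(T)=R(T)=0$.

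First I would substitute the ansatz into $G$. The decisive simplification is that, for a quadratic $V$, the nonlocal jump integral in \eqref{G} telescopes exactly: one has $V(t,y+u\eta)-V(t,y)-u\eta\,\partial_yV=\frac{1}{2}P(t)u^{2}\eta^{2}(t,z)$, so integrating against $\lambda(dz)$ produces $\frac{1}{2}P(t)u^{2}\int_{\mathbb{R}\setminus\{0\}}\eta^{2}(t,z)\lambda(dz)$, which combines with the Brownian term $\frac{1}{2}P(t)u^{2}\sigma_t^{2}$ into $\frac{1}{2}P(t)u^{2}\Lambda_t$. Thus $G$ is a concave quadratic in $u$ with leading coefficient $-\frac{1}{2}P(t)\Lambda_t$, and completing the square yields \eqref{completing square}; since $P(t)>0$ and $\Lambda_t>0$, the unique maximizer is exactly the candidate feedback \eqref{optimal control of DPP}. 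Requiring that \eqref{HJB} hold as an identity in $y$ after inserting this maximizer forces the coefficients of $y^{2}$, $y$, and $1$ to vanish separately, which gives precisely the three ODEs \eqref{ODE P}, \eqref{ODE Q}, \eqref{ODE R} once one writes $\theta_t=(\mu_t-\rho_t)^{2}/\Lambda_t$.

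Next I would solve the ODEs in the triangular order $P\to Q\to R$. Equation \eqref{ODE P} is linear first order, giving the exponential \eqref{P}; equation \eqref{ODE Q} is linear first order with $P(\cdot)$ as a forcing term, integrated by variation of constants to give \eqref{Q}; and \eqref{ODE R} then determines $R(\cdot)$ by one further integration. Since $R$ does not enter \eqref{optimal control of DPP}, only $P$ and $Q$ matter for the feedback law, and their explicit forms confirm that the denominator $P(t)$ stays strictly positive on $[0,T]$, so the control is well defined; comparing \eqref{P}, \eqref{Q} with \eqref{phi solution}, \eqref{psi solution} then yields $\phi=P$ and $\psi=Q$, so the DPP feedback coincides with the MP feedback \eqref{optimal control of MP-state}.

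The main obstacle is the verification step rather than the computation. I would need a verification theorem in the recursive-utility-with-jumps framework to conclude that the quadratic solution of \eqref{HJB} coincides with the value function \eqref{value function} and that \eqref{optimal control of DPP} is genuinely optimal and admissible. Concretely, one substitutes the feedback \eqref{optimal control of DPP} into the forward equation of \eqref{FBSDEP-DPP}, checks that the resulting closed-loop state is an $L^{p}$ solution with $\hat{u}(\cdot)\in\mathcal{U}^{p}[t,T]$ by means of the a priori estimate in Lemma 2.1, and then applies the generalized It\^{o} formula to $V(s,\hat{y}(s))$ together with comparison and uniqueness for the BSDEP to identify $-Y(t)$ with $V(t,y)$. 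The nonstandard feature is that the cost is defined through a BSDEP, so the martingale-optimality argument must be carried out at the level of the recursive utility $Y(\cdot)$ rather than for a classical integral functional; once this identification is secured, optimality of \eqref{optimal control of DPP} follows.
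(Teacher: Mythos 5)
Your proposal follows essentially the same route as the paper: substitute the quadratic ansatz \eqref{value function of P, Q, R} into the generalized Hamiltonian \eqref{G}, complete the square in $u$ to obtain \eqref{completing square} and the candidate feedback \eqref{optimal control of DPP}, match coefficients of $y^{2}$, $y$, $1$ to get the ODEs \eqref{ODE P}--\eqref{ODE R}, and solve $P$ and $Q$ explicitly to identify them with $\phi$ and $\psi$. Your added remarks on the verification theorem and admissibility of the closed-loop control go beyond what the paper records (it stops at the formal HJB computation), but they do not change the argument's structure and are a reasonable strengthening.
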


\section{Relationship between MP and DPP}

After the calculations in section 3 and section 4, we can verify the following result directly, showing the relationship between MP and DPP for our mean-variance portfolio selection problem with Poisson jumps and recursive utilities.
\begin{mythm}
{\bf (Relationship between MP and DPP)}\quad For problem \eqref{cost functional}, let $\hat{u}(\cdot)$ be an optimal control and $\hat{y}(\cdot)$ is the corresponding optimal state/wealth trajectory satisfying \eqref{FBSDEP}, then the following results hold:
\begin{equation}\label{relation between MP and DPP}
\begin{cases}
p(t)=\frac{\partial V}{\partial y}(t,\hat{y}(t))q(t),\\
r(t)=\frac{\partial^2 V}{\partial y^2}(t,\hat{y}(t))\sigma_t\hat{u}(t)q(t),\\
R(t,z)=\Big[\frac{\partial V}{\partial y}\big(t,\hat{y}(t)+\hat{u}(t)\eta(t,z)\big)-\frac{\partial V}{\partial y}(t,\hat{y}(t))\Big]q(t),\quad \text{a.e.}t\in[0,T],\quad \mathbb{P}\text{-a.s.},
\end{cases}
\end{equation}
where $(p(\cdot),r(\cdot),R(\cdot,\cdot))$ satisfies the adjoint equation \eqref{adjoint}, and $V(\cdot,\cdot)$ is the value function.
\end{mythm}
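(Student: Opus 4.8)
The plan is to verify the three identities by direct substitution, exploiting the fact that the value function is quadratic in $y$ and that the coefficient functions arising in the MP and in the DPP have already been shown to coincide. First I would read off the spatial derivatives of the value function from its ansatz \eqref{value function of P, Q, R}, namely
\begin{equation*}
\frac{\partial V}{\partial y}(t,y)=P(t)y+Q(t),\qquad \frac{\partial^2 V}{\partial y^2}(t,y)=P(t).
\end{equation*}
These are the only structural facts about $V$ that the argument requires.

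Next I would assemble the three MP objects in closed form. The adjoint variable $p(\cdot)$ is furnished by the ansatz \eqref{relation of p, y} together with \eqref{q}, so that $p(t)=[\phi(t)\hat{y}(t)+\psi(t)]e^{-\gamma t}$, while the martingale integrands are extracted from the coefficient matching in \eqref{compare}, giving $r(t)=\phi(t)\sigma_t\hat{u}(t)e^{-\gamma t}$ and $R(t,z)=\phi(t)\hat{u}(t)\eta(t,z)e^{-\gamma t}$. Recalling $q(t)=e^{-\gamma t}$ from \eqref{q}, all three MP quantities are thereby written through $\phi$, $\psi$ and $q$.

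The decisive step is to invoke the identifications \eqref{phi and P} and \eqref{psi and Q}, that is $\phi\equiv P$ and $\psi\equiv Q$, which were established in Section 4 by comparing the explicit ODE solutions. After this substitution the first identity is immediate, since $p(t)=[P(t)\hat{y}(t)+Q(t)]q(t)=\frac{\partial V}{\partial y}(t,\hat{y}(t))q(t)$; the second is equally direct because $r(t)=P(t)\sigma_t\hat{u}(t)q(t)=\frac{\partial^2 V}{\partial y^2}(t,\hat{y}(t))\sigma_t\hat{u}(t)q(t)$. For the third identity I would use the linearity of $\partial V/\partial y$ in its spatial argument, so that the increment collapses exactly,
\begin{equation*}
\frac{\partial V}{\partial y}\big(t,\hat{y}(t)+\hat{u}(t)\eta(t,z)\big)-\frac{\partial V}{\partial y}(t,\hat{y}(t))=P(t)\hat{u}(t)\eta(t,z),
\end{equation*}
whence multiplying by $q(t)$ reproduces $R(t,z)$.

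Because every ingredient is already in hand, there is no genuine analytic obstacle and the proof is a verification. The one point demanding care is the jump relation: unlike the diffusion relation, which features the second derivative $\partial^2 V/\partial y^2$, the Poisson relation is phrased through a first-order increment of $\partial V/\partial y$ evaluated at the shifted state $\hat{y}(t)+\hat{u}(t)\eta(t,z)$. It is precisely the quadratic, hence affine-gradient, structure of $V$ that renders this increment linear and lets it match the MP integrand $R(t,z)$ with no remainder; I would therefore stress that this matching is special to the linear-quadratic setting and would not persist for a general value function.
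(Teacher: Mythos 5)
Your proposal is correct and is exactly the argument the paper intends: the authors' proof is the one-line remark that the result follows from \eqref{relation of p, y}, \eqref{compare}, \eqref{value function of P, Q, R}, \eqref{phi and P} and \eqref{psi and Q}, and you have simply carried out that verification explicitly, including the correct observation that the jump relation closes only because the quadratic ansatz makes $\partial V/\partial y$ affine in $y$.
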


\begin{proof} It is direct by \eqref{relation of p, y}, \eqref{compare}, \eqref{value function of P, Q, R}, \eqref{phi and P} and \eqref{psi and Q}. We omit the detail.
\end{proof}

\section{Conclusions}

In this paper, we have studied the maximum principle and dynamic programming principle for mean-variance portfolio selection problem with Poisson jumps and recursive utilities. The optimal portfolio is obtained through two methods respectively. What's more, the relationship between MP and DPP is investigated. In particular, a comparative analysis of various efficient frontier expressions in mean-variance frameworks has been conducted to identify the distinctions among them.

In the future, we hope to simplify the expression of the efficient frontier derived in this paper and examine the practical implications of the parameters in the formula, exploring the implications of this efficient frontier for financial practice. Moreover, we will consider a broader range of financial mathematics problems, employing BSDE and optimal control theory to characterize financial products and investment behaviors.

\end{document}